\documentclass[hidelinks,12pt,a4paper,reqno]{amsart}
%
%
\usepackage{graphicx}
\usepackage[all]{xy}
\usepackage{amssymb}
\usepackage{amsmath}
\usepackage[utf8]{inputenc}
\usepackage{amsthm}
\usepackage{multicol}

\newtheorem{proposition}{Proposition}

\newtheorem{definition}{Definition}
\newtheorem{theorem}{Theorem}

%
%
%
\newcommand{\C}{\mathbf{C}}

\newcommand{\Mon}{\mathbf{Mon}}

\newcommand{\Set}{\mathbf{Set}}
\newcommand{\Mag}{\mathbf{Mag}}
\newcommand{\Act}{\mathbf{Act}}
\newcommand{\Psb}{\mathbf{Psb}}

\newcommand{\map}{\mathrm{map}}


%
%

\title[Pointed semibiproducts of monoids]{Pointed semibiproducts of monoids}


\author{Nelson Martins-Ferreira}
\address[Nelson Martins-Ferreira]{Instituto Politécnico de Leiria, Leiria, Portugal}
\thanks{ }
\email{martins.ferreira@ipleiria.pt}


		
\begin{document}

\begin{abstract}
    Semibiproducts of monoids are introduced here as a common generalization to biproducts (of abelian groups) and to semidirect products (of groups) for exploring a wide class of monoid extensions. More generally, abstract semi\-bi\-products exist in any concrete category over sets in which map addition is meaningful thus reinterpreting Mac Lane's relative biproducts. In the pointed case they give rise to a special class of extensions called semibiproduct extensions. Not all monoid extensions are  semibiproduct extensions but all group extensions are. A categorical equivalence is established between the category of pointed semibiproducts of monoids and the category of pointed monoid action systems, a new category of actions that emerges from the equivalence. The main difference to classical extension theory is that semibiproduct extensions are treated in the same way as split extensions, even though the section map may fail to be a homomorphism. A list with all the 14 semibiproduct extensions of 2-element monoids is provided.
\end{abstract}

\keywords{Semibiproduct, biproduct, semidirect product of groups and monoids, pointed semibiproduct, semibiproduct extension,  pointed monoid action system, Schreier extension}


\maketitle

\section{Introduction}

Biproducts were introduced by Mac Lane in his book \emph{Homology} to study split extensions in the context of abelian categories. Semidirect products are appropriate to study group split extensions. Although these concepts have been thoroughly developed over the last decades in the context of protomodular and semi-abelian categories \cite{BB,Bourn,BournJanelidze,semiabelian}, the notion of \emph{relative biproduct} introduced by Mac Lane to study  relative split extensions seems to have been forgotten (see \cite{Homology}, p.~263). On the other hand, much work has been done in  extending the tools and techniques from  groups \cite{MacLane2} to monoids \cite{DB.NMF.AM.MS.13, NMF14,DB.NMF.AM.MS.16,Faul,Fleischer,Ganci,Leech,NMF et all,Wells} and even more general settings \cite{GranJanelidzeSobral}. However, as it has been observed several times, it is not a straightforward task to take a well-known result in the category of groups (or any other semi-abelian category) and materialize it in the category of monoids not to mention more general situations. We will argue that a convenient reformulation of relative biproduct (called \emph{semibiproduct}) can be used to study group and monoid extensions as a unified frame of work. Even though semidirect products are suitable to describe all group split extensions they fail to capture those group extensions that do not split.
The key observation to semibiproducts in reinterpreting relative biproducts (see \cite{Homology}, diagram (5.2), p.~263 and compare with diagram $(\ref{diag: biproduct in a Mag-category})$ in Definition  \ref{def: semibiproduct}) is that although an extension may fail to split as a monoid extension or as a group extension, it necessarily splits as an extension of pointed sets.

The main result (Theorem \ref{thm: equivalence}) establishes an equivalence of categories between pointed semibiproducts of monoids (Definition \ref{def: pointed semibiproduct of monoids}) and pointed monoid action systems (Definition \ref{def: pseudo-action}). The 14 classes of non-isomorphic pointed semibiproducts of 2-element monoids are listed in Section \ref{sec: eg}. We start with some motivation in Section \ref{sec: motivation}, introduce $\Mag$-categories and semibiproducts in Section \ref{Sec: Mag-categories}, restrict to the pointed case in Section~\ref{sec: stability} while studying some stability properties and pointing out some differences and similarities between groups, monoids and unitary magmas. From Section \ref{Sec: Sbp} on we work towards the main result and restrict our attention to monoids.

\section{Motivation}\label{sec: motivation}

It is well known that a split extension of groups
\[\xymatrix{X\ar[r]^k & A \ar[r]^{p} & B,} \]
with a specified section $s\colon{B\to A}$, can be completed into a  diagram of the form
\[\xymatrix{X\ar@<-.5ex>[r]_{k} & A\ar@<-.5ex>@{..>}[l]_{q}\ar@<.5ex>[r]^{p} & B \ar@{->}@<.5ex>[l]^{s},}\]
in which $q\colon{A\to X}$ is the map uniquely determined by the formula $kq(a)=a-sp(a)$, $a\in A$. Furthermore, the information needed to reconstruct the split extension as a semidirect product is encoded in the map $\varphi\colon{B\times X\to X}$, uniquely determined as $\varphi(b,x)=q(s(b)+k(x))$. If writing the element $\varphi(b,x)\in X$ as $b\cdot x$ we see that $k(b\cdot x)$ is equal to $s(b)+k(x)-s(b)$ and that the group $A$ is recovered as the semidirect product $X\rtimes_{\varphi}B$. In the event that the section $s$, while being a zero-preserving map, is not necessarily a group homomorphism then the classical treatment of group extensions prescribes a different procedure (see e.g.\ \cite{Northcott}, p.~238). However, the results obtained here suggest that non-split extensions may be treated similarly to split extensions, and moreover the same approach is carried straightforwardly into the context of monoids.
Indeed, when $s$ is not a homomorphism, in addition to the map $\varphi$, we get a map $\gamma\colon{B\times B\to X}$, determined as $\gamma(b,b')=q(s(b)+s(b'))$ and the group $A$ is recovered as the set $X\times B$ with group operation
\[(x,b)+(x',b')=(x+\varphi(b,x')+\gamma(b,b'),b+b')\]
defined for every $x,x'\in X$ and $b,b'\in B$. Note that $X$ needs not be commutative. However, instead of simply saying that $\varphi$ is an action and that $\gamma$ is a factor system, we have to consider two maps $\varphi$ and $\gamma$ which in conjunction turn the set $X\times B$ into a group with a prescribed operation (Section \ref{Sec: Act}). This is precisely what we call a semibiproduct of groups. Observe that when $s$ is a homomorphism it reduces to the usual notion of semidirect product.

Almost every step in the treatment of groups is carried over into the context of monoids. However, while in groups all extensions are obtained as semibiproducts, in monoids we have to restrict our attention to those for which there exists a section $s$ and a retraction $q$ satisfying the condition $a=kq(a)+sp(a)$ for all $a\in A$ (Section \ref{Sec: Sbp}). Consequently, in addition to the maps $\varphi$ and $\gamma$ obtained as in groups, a new map $\rho\colon{X\times B\to X}$, determined as $\rho(x,b)=q(k(x)+s(b))$ needs to be taken into consideration. Hence, the monoid $A$ is recovered as the set $\{(x,b)\in X\times B\mid\rho(x,b)=x\}$ with operation
\begin{equation}\label{eq: operation}
    (x,b)+(x',b')=(\rho(x+\varphi(b,x')+\gamma(b,b'),b+b'),b+b')
\end{equation}
which is defined for every $x,x'\in X$ and $b,b'\in B$.


\section{$\Mag$-categories and semibiproducts}\label{Sec: Mag-categories}

Let $\Mag$ denote the category of magmas and magma homomorphisms and let $U\colon{\Mag\to \Set}$ be the forgetful functor into the category of sets and maps. By a $\Mag$-category we mean a category $\C$ together with a bifunctor $\map\colon{\C^{\text{op}}\times \C\to \Mag}$ and a natural inclusion $\varepsilon\colon{\hom_{\C}\to U\circ \map}$. If $\C$ is a concrete category over sets in which a meaningful map addition is available then a bifunctor $\map$ is obtained as follows. For every pair of objects $(A,B)$ in $\C$, $\map(A,B)$ is the magma of underlying maps from object $A$ to object $B$ equipped with component-wise addition. In particular $\map(A,B)$ contains $\hom_{\C}(A,B)$ as a subset since $$\varepsilon_{A,B}\colon{\hom_{\C}(A,B)\to U(\map(A,B))}$$ is required to be a natural inclusion. As expected the category $\Mag$ is a $\Mag$-category with $\map(A,B)$ the magma of all maps from $U(A)$ to $U(B)$. If $f$ is a magma homomorphism from $A$ to $B$ then $\varepsilon_{A,B}(f)$ is nothing but $f$ considered as a map between the underlying sets of $A$ and $B$. In the same way the categories of groups, abelian groups, monoids and commutative monoids are $\Mag$-categories. However, there is a significant distinction between the Ab-category of abelian groups, the linear category  of commutative monoids and the Mag-categories of groups or monoids. If $A$ is an object in an Ab-category then  $\hom(A,A)$ is  a ring. If $A$ is an object in  a linear category then $\hom(A,A)$  is a semiring. In contrast, if $A$ is a group (or a monoid) then $\hom(A,A)$ is a subset of the near-ring $\map(A,A)$. 

\begin{definition}\label{def: semibiproduct}
    Let $(\C,\map,\varepsilon)$ be a $\Mag$-category. A \emph{semibiproduct} is a tuple $(X,A,B,p,k,q,s)$ represented as a diagram of the shape
    \begin{equation}
        \label{diag: biproduct in a Mag-category}
        \xymatrix{X\ar@<-.5ex>[r]_{k} & A\ar@<-.5ex>@{..>}[l]_{q}\ar@<.5ex>[r]^{p} & B \ar@{..>}@<.5ex>[l]^{s}}
    \end{equation}
    in which $p\colon{A\to B}$ and $k\colon{X\to A}$ are morphisms in $\C$,  whereas $q\in \map(A,X)$ and $s\in \map(B,A)$. Furthermore, the following conditions are satisfied:
    \begin{eqnarray}
        ps={1_B}\label{eq: biproduct1}\\ qk={1_{X}},\label{eq: biproduct2}\\
        kq+sp={1_A}.\label{eq: biproduct3}
    \end{eqnarray}
\end{definition}

There is an obvious abuse of notation in the previous conditions. This is justified because we will be mostly concerned with the case in which $\C$ is the category of monoids and $\map(A,B)$ is the set of zero-preserving maps. In rigorous terms, condition $ps=1_B$ should have been written as $\map(1_B,p)(s)=\varepsilon_{B,B}(1_B)$ whereas condition $qk=1_X$
should have been written as $\map(k,1_X)(q)=\varepsilon_{X,X}(1_X)$. In the same way the condition $\map(1_A,k)(q)+\map(p,1_A)(s)=\varepsilon_{A,A}(1_A)$ should have been written in the place of $kq+sp=1_A$.
For the moment we will not develop this concept further but rather concentrate our attention on  the concrete cases of  groups  and monoids.

\section{Stability properties of pointed semibiproducts}\label{sec: stability}

From now on the category $\C$ is assumed to be either the category of groups or the category of monoids (occasionally we will refer to the category of unitary magmas) and $\map(A,B)$ is the magma of zero-preserving maps with component-wise addition.
In each case the category $\C$ is pointed and the composition of maps is well defined. It will be convenient to consider \emph{pointed semibiproducts} by requiring two further conditions, namely
\begin{eqnarray}
    pk=0_{X,B},\quad qs=0_{B,X}.
\end{eqnarray}
However, as it is well known, in the case of groups this distinction is irrelevant.

\begin{proposition} Every semibiproduct of groups is pointed.
\end{proposition}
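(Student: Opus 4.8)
The plan is to extract both pointedness conditions $pk=0_{X,B}$ and $qs=0_{B,X}$ purely from the three defining identities $ps=1_B$, $qk=1_X$ and $kq+sp=1_A$, exploiting the one feature that distinguishes groups from monoids: every element of a group is cancellable on either side. First I would record the elementary observation that $k$ is injective: since $qk=1_X$, the map $q$ is a set-theoretic left inverse of $k$, so $k(x)=k(x')$ forces $x=x'$. I would also keep in mind that $q$ and $s$, though not assumed to be homomorphisms, lie in $\map$ and hence preserve zero.

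For the identity $pk=0_{X,B}$ I would evaluate $kq+sp=1_A$ at an element of the form $k(x)$, with $x\in X$. Using $qk=1_X$ this reads $k(x)+s\bigl(p(k(x))\bigr)=k(x)$, and left-cancelling $k(x)$ in the group $A$ gives $s\bigl(p(k(x))\bigr)=0$. Applying the homomorphism $p$ and invoking $ps=1_B$ then yields $p(k(x))=p\bigl(s(p(k(x)))\bigr)=p(0)=0$, which is the desired conclusion.

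The identity $qs=0_{B,X}$ is obtained symmetrically, by evaluating $kq+sp=1_A$ at $s(b)$ with $b\in B$. Here $ps=1_B$ turns the relation into $k\bigl(q(s(b))\bigr)+s(b)=s(b)$, and right-cancellation in $A$ gives $k\bigl(q(s(b))\bigr)=0=k(0)$. Since $k$ is injective and $k(0)=0$, this forces $q(s(b))=0$.

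I expect the only real subtlety to be the systematic use of cancellation: in each case the defining relation collapses to an equation of the form $a+c=a$ or $c+a=a$, and it is precisely the invertibility of elements in a group that lets one conclude $c=0$. This is exactly the step that fails for monoids, which is why pointedness must be imposed as a separate requirement there but comes for free in the group case.
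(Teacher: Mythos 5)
Your proof is correct and follows essentially the same route as the paper: precompose/postcompose the identity $kq+sp=1_A$ with $k$ and $s$, then use group cancellation (plus injectivity of $k$, which the paper also invokes) to extract $pk=0$ and $qs=0$. The only cosmetic difference is in the first half, where the paper applies $p$ to the whole identity and cancels in $B$ (obtaining $pk=pk+pk$), while you cancel $k(x)$ in $A$ first and then apply $p$; both are valid and equivalent in substance.
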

\begin{proof}
    We have $pk=p1_Ak=p(kq+sp)k=pkqk+pspk=pk+pk$. And $s=1_As=(kq+sp)s=kqs+sps=kqs+s$. Hence we may conclude $pk=0$ and $kqs=0$. Since $k$ is a monomorphism $qs=0$.
\end{proof}

The previous proof also shows that a semibiproduct of monoids is pointed as soon as the monoid $A$ admits right cancellation. Clearly, this is not a general fact.

\begin{proposition}
    Let $A$ be a monoid. The tuple $(A,A,A,1_A,1_A,1_A,1_A)$ is a semibiproduct of monoids if and only if $A$ is an idempotent monoid.
\end{proposition}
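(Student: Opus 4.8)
The plan is to verify the three defining conditions of Definition~\ref{def: semibiproduct} directly for the tuple $(A,A,A,1_A,1_A,1_A,1_A)$, in which every structure map is the identity. First I would record the set-up checks: $p=k=1_A$ are monoid homomorphisms and $q=s=1_A$ are zero-preserving maps, so the tuple is at least a legitimate candidate in the $\Mag$-category of monoids. Then I would observe that conditions \eqref{eq: biproduct1} and \eqref{eq: biproduct2}, which here read $ps=1_A$ and $qk=1_A$, hold automatically, since the composite of two identities is again the identity; these impose no restriction on $A$.

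The whole content therefore sits in condition \eqref{eq: biproduct3}, and the key step is to unwind it correctly. Following the remark after Definition~\ref{def: semibiproduct}, the symbol $+$ here denotes the pointwise addition in the magma $\map(A,A)$ induced by the monoid operation of $A$, not composition. Evaluating the left-hand side at an arbitrary $a\in A$ gives $(kq+sp)(a)=kq(a)+sp(a)=a+a$, whereas the right-hand side $1_A$ sends $a$ to $a$. Hence \eqref{eq: biproduct3} is equivalent to the identity $a+a=a$ holding for every $a\in A$, which is precisely the statement that $A$ is idempotent. Reading this equivalence in both directions yields the proposition.

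The one place that demands care --- and the only plausible obstacle --- is the bookkeeping of the abuse of notation flagged just after Definition~\ref{def: semibiproduct}: one must keep in mind that \eqref{eq: biproduct3} is the equation $\map(1_A,k)(q)+\map(p,1_A)(s)=\varepsilon_{A,A}(1_A)$, so that $+$ is the pointwise magma sum and the evaluation above is legitimate. It is also worth noting that the statement concerns a semibiproduct in the sense of Definition~\ref{def: semibiproduct} and not a pointed one; were the pointing conditions $pk=0$ and $qs=0$ imposed, the same evaluation would force $1_A=0$ and collapse $A$ to the trivial monoid, so those conditions are deliberately absent here.
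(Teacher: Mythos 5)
Your argument is correct and is essentially the paper's proof: the paper likewise observes that conditions \eqref{eq: biproduct1} and \eqref{eq: biproduct2} are automatic and that condition \eqref{eq: biproduct3} evaluates pointwise to $a=a+a$, i.e.\ idempotency. Your extra remarks on the notation and on why the pointedness conditions are omitted are accurate but not needed for the equivalence.
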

\begin{proof}
    Condition $(\ref{eq: biproduct3})$ in this case becomes $a=a+a$ for all $a\in A$.
\end{proof}

Every pointed semibiproduct of monoids has an underlying exact sequence.

\begin{proposition}\label{thm:kernel of p}\label{thm:cokernel of k}
    Let $(X,A,B,p,k,q,s)$ be a pointed semibiproduct of monoids. The sequence
    \[\xymatrix{X\ar[r]^k & A \ar[r]^{p} & B} \] is an exact sequence.
\end{proposition}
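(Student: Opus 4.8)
The plan is to read \emph{exact sequence} in the pointed-categorical sense appropriate to monoids: I will show that $k$ is the kernel of $p$ and, dually, that $p$ is the cokernel of $k$. Both halves rest on the single splitting identity (\ref{eq: biproduct3}), which rewrites every $a\in A$ as $a = kq(a)+sp(a)$, together with the fact that $q$ and $s$ preserve $0$ (even though they need not be homomorphisms) and the pointing condition $pk=0$.

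First I would establish the kernel half. The equality $pk=0$ is one of the pointing conditions, and $qk=1_X$ from (\ref{eq: biproduct2}) shows that $k$ is injective, hence a monomorphism. To compute the underlying kernel I take $a\in A$ with $p(a)=0$; then (\ref{eq: biproduct3}) gives $a = kq(a)+sp(a) = kq(a)+s(0) = kq(a)$, since $s$ is zero-preserving, so $a\in k(X)$, while the reverse inclusion $k(X)\subseteq p^{-1}(0)$ is immediate from $pk=0$. For the universal property, given a homomorphism $f\colon Y\to A$ with $pf=0$ its image lies in $k(X)$, so I set $g=qf$; the previous computation yields $kg=f$, and because $k$ is an injective homomorphism, $g$ is forced to be a homomorphism and is the unique such factorization. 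Hence $k=\ker(p)$.

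Next I would treat the cokernel half. From $ps=1_B$ in (\ref{eq: biproduct1}) the map $p$ is surjective. Given any homomorphism $h\colon A\to C$ with $hk=0$, I set $\bar h = hs$ and first verify $\bar h p = h$: applying $h$ to $a = kq(a)+sp(a)$, using that $h$ is a homomorphism and that $hk=0$, gives $h(a) = h(kq(a))+h(sp(a)) = 0 + \bar h(p(a)) = \bar h(p(a))$. To see that $\bar h$ is a homomorphism I use surjectivity: writing $b_i=p(a_i)$, the identity $\bar h p = h$ together with $h$ being a homomorphism gives $\bar h(b_1+b_2) = h(a_1+a_2) = h(a_1)+h(a_2) = \bar h(b_1)+\bar h(b_2)$, and surjectivity of $p$ likewise forces uniqueness of $\bar h$. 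Thus $p=\mathrm{coker}(k)$, completing exactness.

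The main obstacle is purely the bookkeeping around the fact that $q$ and $s$ are only zero-preserving maps, so that neither $g=qf$ nor $\bar h=hs$ is manifestly a homomorphism. The device that defuses this in both halves is the splitting identity (\ref{eq: biproduct3}): it lets me absorb the non-homomorphic pieces into $k$ (respectively $p$), whose homomorphism and injectivity/surjectivity properties then carry the argument. I would only be careful to invoke the zero-preservation of $s$ and the condition $pk=0$ at the right places, and to note that no cancellation or commutativity is used, so the proof applies to monoids and not merely to groups.
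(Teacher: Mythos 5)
Your proposal is correct and follows essentially the same route as the paper: both halves hinge on the splitting identity $kq+sp=1_A$ together with $pk=0$ and the zero-preservation of $q$ and $s$, factoring a test map $f$ with $pf=0$ through $k$ via $qf$ and a test map $h$ with $hk=0$ through $p$ via $hs$. The only cosmetic differences are that you transfer the homomorphism property to $qf$ via injectivity of $k$ (rather than the paper's direct computation of $qf(z+z')$ using $qk=1_X$) and use surjectivity of $p$ for the cokernel half where the paper invokes the section $s$ explicitly.
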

\begin{proof}
    Let $f\colon{Z\to A}$ be a morphism such that $pf=0$. Then the map $\bar{f}=qf$ is a homomorphism
    \begin{eqnarray*}
        qf(z+z')&=&q(fz+fz')=q(kqf(z)+spf(z)+kqf(z')+spf(z'))\\
        &=& q(kqf(z)+0+kqf(z')+0)\\
        &=& qk(qf(z)+qf(z'))=qf(z)+qf(z')
    \end{eqnarray*}
    and it is unique with the property $k\bar{f}=f$. Indeed, if $k\bar{f}=f$ then $qk\bar{f}=qf$ and hence $\bar{f}=qf$. This means that $k$ is the kernel of $p$.

    Let $g\colon{A\to Y}$ be a morphism and suppose that $gk=0$. It follows that $g=gsp$,
    \begin{equation*}
        g=g1_A=g(kq+sp)=gkq+gsp=0+gsp=gsp,
    \end{equation*}
    and consequently the map $\bar{g}=gs$ is a homomorphism, indeed
    \begin{eqnarray*}
        gs(b)+gs(b')=g(sb+sb')=gsp(sb+sb')=gs(b+b').
    \end{eqnarray*}
    The fact that $\bar{g}=gs$ is the unique morphism with the property $\bar{g}p=g$ follows from $\bar{g}ps=gs$ which is the same as $\bar{g}=gs$. Hence $p$ is the cokernel of $k$ and the sequence is exact.
\end{proof}

The following results show that pointed semibiproducts are stable under pullback and in particular split semibiproducts of monoids are stable under composition.

\begin{proposition}\label{thm:stable under pullback} Pointed semibiproducts of monoids are stable under pullback.
\end{proposition}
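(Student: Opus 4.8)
The plan is to interpret \emph{pullback} as base change along a monoid homomorphism $f\colon{B'\to B}$ and to equip the resulting pullback sequence with a semibiproduct structure. Concretely, given a pointed semibiproduct $(X,A,B,p,k,q,s)$, I would form the pullback of $p$ along $f$, namely the submonoid of $A\times B'$ (with componentwise operation) cut out by the pullback constraint,
\[
A'=\{(a,b')\in A\times B'\mid p(a)=f(b')\},
\]
together with the projection $p'\colon{A'\to B'}$, $(a,b')\mapsto b'$. I would then define the remaining structure maps by $k'\colon{X\to A'}$, $x\mapsto(k(x),0)$; by $q'\colon{A'\to X}$, $(a,b')\mapsto q(a)$; and by $s'\colon{B'\to A'}$, $b'\mapsto(s(f(b')),b')$.

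First I would check that these maps are well defined and of the right kind. The map $k'$ genuinely lands in $A'$ because $p(k(x))=0=f(0)$, using the pointedness condition $pk=0$, and it is a homomorphism since $k$ is one and the second coordinate is constantly the neutral element; the projection $p'$ is evidently a homomorphism. The map $s'$ lands in $A'$ because $p(s(f(b')))=f(b')$ follows from $ps=1_B$, and both $q'$ and $s'$ are zero-preserving since $q$, $s$ and $f$ are.

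Next I would verify the defining conditions. Conditions $(\ref{eq: biproduct1})$ and $(\ref{eq: biproduct2})$ are immediate, since $p's'(b')=b'$ and $q'k'(x)=q(k(x))=x$, and the two pointedness conditions are equally direct, as $p'k'(x)=0$ and $q's'(b')=q(s(f(b')))=0$ by $qs=0$.

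The main point, and the step I expect to be the crux, is condition $(\ref{eq: biproduct3})$, because it is the only place where the pullback constraint is actually used. For $(a,b')\in A'$ I would compute
\[
k'q'(a,b')+s'p'(a,b')=(kq(a),0)+(s(f(b')),b')=(kq(a)+s(f(b')),\,b').
\]
Here the constraint $f(b')=p(a)$ is essential: it lets me replace $s(f(b'))$ by $sp(a)$, after which $(\ref{eq: biproduct3})$ for the original semibiproduct gives $kq(a)+sp(a)=a$, so the right-hand side is exactly $(a,b')$. Hence $k'q'+s'p'=1_{A'}$ and $(X,A',B',p',k',q',s')$ is a pointed semibiproduct. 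Finally I would observe that the evident projection $A'\to A$, $(a,b')\mapsto a$, together with $1_X$ and $f$, assembles these data into a cartesian morphism of semibiproducts, so the construction realizes an honest pullback rather than merely an abstract semibiproduct sharing the kernel $X$.
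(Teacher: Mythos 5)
Your proof is correct and follows essentially the same route as the paper: you form the pullback $A\times_B B'$ of $p$ along a homomorphism, equip it with the induced maps $\langle k,0\rangle$, $q\pi_1$ and $\langle sf,1\rangle$, and reduce the crucial identity $k'q'+s'p'=1_{A'}$ to $a=kq(a)+sp(a)$ via the pullback constraint $p(a)=f(b')$, exactly as in the paper's argument. The closing remark on the induced morphism into the original semibiproduct is a small bonus not present in the paper but changes nothing essential.
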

\begin{proof}
    Let $(X,A,B,p,k,q,s)$ be a pointed semibiproduct of monoids displayed as the bottom row in the following diagram which is obtained by taking the pullback of $p$ along an arbitrary morphism $h\colon{C\to B}$, with induced morphism $\langle k,0 \rangle$ and map $\langle sh,1 \rangle$,
    \begin{eqnarray}
        \vcenter{\xymatrix{X\ar@<-.5ex>[r]_(.35){\langle k,0 \rangle}\ar@{=}[d]_{} & A\times_B C\ar@{->}@<0ex>[d]^{\pi_1}\ar@<-.5ex>@{..>}[l]_(.6){q\pi_1}\ar@<.5ex>[r]^(.6){\pi_2} & C\ar@{->}[d]^{h} \ar@{->}@<.5ex>@{..>}[l]^(.35){\langle sh,1\rangle}\\
        X\ar@<-.5ex>[r]_{k} & A \ar@<-.5ex>@{..>}[l]_{q}\ar@<.5ex>[r]^{p} & B \ar@{->}@<.5ex>@{..>}[l]^{s}.}}
    \end{eqnarray}
    We have to show that the top row is a pointed semibiproduct of monoids. By construction we have $\pi_2\langle sh,1\rangle=1_C$, $\pi_2\langle k,0\rangle=0$, $q\pi_1\langle sh,1\rangle=qsh=0$, $q\pi_1\langle k,0\rangle=qk=1_X$. It remains to prove the identity
    \[(a,c)=(kq(a),0)+(sh(c),c)=(kq(a)+sh(c),c)\]
    for every $a\in A$ and $c\in C$ with $p(a)=h(c)$, which follows from $a=kq(a)+sp(a)=kq(a)+sh(c)$.
\end{proof}

The previous results are stated at the level of monoids but are easily extended to unitary magmas. The  particular case of semidirect products has  been considered in \cite{GranJanelidzeSobral} and the notion of composable pair of pointed semibiproducts is borrowed from  there.
We say that a pointed semibiproduct $(X,A,B,p,k,q,s)$ can be composed with a pointed semibiproduct $(C,B,D,p',k',q',s')$  if the tuple $$(A\times_B C,A,D,p'p,\pi_1,q'',ss'),$$ in which $q''$ is such that $\pi_1q''=kq+sk'q'p$ and $\pi_2q''=q'p$, is a  pointed semibiproduct.
Note that in the case of groups the map $q$  is uniquely determined as $q(a)=a-sp(a)$ for all $a\in A$. However this is not the case for monoids nor for unitary magmas.

\begin{proposition}
    A pointed semibiproduct of monoids $(X,A,B,p,k,q,s)$ can be composed  with $(C,B,D,p',k',q',s')$, another pointed semibiproduct of monoids,  if and only if the map $s$ is equal to the map $sk'q'+ss'p'$.
\end{proposition}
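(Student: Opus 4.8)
The plan is to verify, one by one, the defining identities of a pointed semibiproduct for the candidate tuple $(A\times_B C, A, D, p'p, \pi_1, q'', ss')$, and to isolate the single identity that is equivalent to the asserted equality $s = sk'q' + ss'p'$; all the remaining ones will turn out to hold unconditionally. First I would record the bookkeeping: the pullback $A\times_B C$ is formed componentwise, so $\pi_1$ and $p'p$ are monoid homomorphisms while $q''$ and $ss'$ are zero-preserving maps. One must also check that $q''$ actually lands in the pullback, i.e. that $p\,\pi_1 q'' = k'\pi_2 q''$; expanding the left-hand side with $pk = 0$ and $ps = 1_B$ reduces both sides to $k'q'p$.

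Next I would clear away the conditions that need no hypothesis. The identity $(p'p)(ss') = 1_D$ is immediate from $ps = 1_B$ and $p's' = 1_D$. The identity $q''\pi_1 = 1_{A\times_B C}$ is checked componentwise on an element $(a,c)$ with $p(a) = k'(c)$, using $q'k' = 1_C$ and the first semibiproduct's identity $kq + sp = 1_A$. The two pointed conditions $(p'p)\pi_1 = 0$ and $q''(ss') = 0$ follow from $p'k' = 0$, and from $qs = 0$ together with $q's' = 0$, respectively. None of these computations uses anything beyond the axioms of the two given semibiproducts.

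The crux is the third semibiproduct axiom, $\pi_1 q'' + (ss')(p'p) = 1_A$. Evaluated at $a\in A$ and simplified with $kq(a) + sp(a) = a$, it becomes $kq(a) + sk'q'(p(a)) + ss'p'(p(a)) = kq(a) + s(p(a))$. Assuming $s = sk'q' + ss'p'$, the last two terms on the left collapse to $s(p(a))$ and the identity holds. For the converse I would evaluate the same axiom at the particular element $a = s(b)$, for arbitrary $b\in B$: since $ps = 1_B$ gives $p(s(b)) = b$ and $qs = 0$ kills the term $kq(s(b))$, what remains is exactly $sk'q'(b) + ss'p'(b) = s(b)$, which is the claimed equality of maps $B\to A$.

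The main obstacle is precisely this converse, and the reason it is not automatic is worth stating. One is tempted to rewrite $sk'q'(b) + ss'p'(b)$ as $s\big(k'q'(b) + s'p'(b)\big) = s(b)$ by invoking $k'q' + s'p' = 1_B$, but this step is illegitimate because $s$ is only a zero-preserving map and need not preserve addition. In the group case $q$ is forced by $kq(a) = a - sp(a)$, and the identity comes for free; for monoids the failure of $s$ to be a homomorphism turns the composability of the two semibiproducts into the genuine constraint $s = sk'q' + ss'p'$.
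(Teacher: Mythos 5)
Your proposal is correct and follows essentially the same route as the paper: reduce everything to the single axiom $\pi_1 q'' + ss'p'p = 1_A$, check the remaining identities unconditionally, and obtain the converse by restricting that axiom along $s$ (your evaluation at $a = s(b)$ is exactly the paper's composition $kqs + sk'q'ps + ss'p'ps = s$, simplified via $qs=0$ and $ps=1_B$). Your closing remark on why the converse is not automatic matches the paper's observation that the condition is trivial when $s$ is a homomorphism.
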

\begin{proof}
    Let us observe that the tuple $(A\times_B C,A,D,p'p,\pi_1,q'',ss')$ is a pointed semibiproduct if and only if $\pi_1q''+ss'p'p=1_A$.
    Indeed, the kernel of the composite $p'p$ is obtained by taking the pullback of $p$ along $k'$, the kernel of $p'$, as illustrated
    \begin{eqnarray}
        \vcenter{\xymatrix{ & A\times_B C\ar@{->}@<0ex>[d]_{\pi_1}\ar@<.5ex>[r]^(.6){\pi_2} & C\ar@{->}[d]_{k'} \ar@{->}@<.5ex>@{..>}[l]^(.35){\langle sk',1\rangle}\\
        X\ar@<-.5ex>[r]_{k} & A \ar@<-.5ex>@{..>}[l]_{q}\ar@<.5ex>[r]^{p} \ar[d]_{p'p} & B \ar@{->}@<.5ex>@{..>}[l]^{s} \ar[d]_{p'}\\
        & D\ar@{=}[r] & D.}}
    \end{eqnarray}
    In order to obtain a pointed semibiproduct we complete de diagram with a map $q''$ such that $\pi_1q''=kq+sk'q'p$ and $\pi_2q''=q'p$ as illustrated
    \begin{eqnarray}
        \vcenter{\xymatrix{ & A\times_B C\ar@{->}@<-0.5ex>[d]_{\pi_1}\ar@<.5ex>[r]^(.6){\pi_2} & C\ar@{->}[d]_{k'} \ar@{->}@<.5ex>@{..>}[l]^(.35){\langle sk',1\rangle}\\
        X\ar@<-.5ex>[r]_{k} & A \ar@<-.5ex>@{..>}[l]_{q} \ar@<-.5ex>@{..>}[u]_{q''} \ar@<.5ex>[r]^{p} \ar@<-.5ex>[d]_{p'p}  & B \ar@{->}@<.5ex>@{..>}[l]^{s} \ar[d]_{p'}\\
        & D \ar@<-.5ex>@{..>}[u]_{ss'} \ar@{=}[r] & D.}}
    \end{eqnarray}
    The map $q''$ is well defined, $p(kq+sk'q'p)=pkq+psk'q'p=k'q'p$. Moreover, $p'pss'=1_D$, $p'p\pi_1=p'k'\pi_2=0$, $q''ss'=0$ and we observe
    \begin{eqnarray*}
        q''\pi_1 &=&\langle kq+sk'q'p,q'p \rangle \pi_1\\
        &=&\langle kq\pi_1+sk'q'p\pi_1,q'p\pi_1 \rangle \\
        &=&\langle kq\pi_1+sk'q'k'\pi_2,q'k'\pi_2 \rangle \\
        &=&\langle kq\pi_1+sp\pi_1,\pi_2 \rangle \\
        &=&\langle \pi_1,\pi_2 \rangle =1_{A\times_B C}.
    \end{eqnarray*}
    It remains to analyse the condition $\pi_1q''+ss'p'p=1_A$. If $s=sk'q'+ss'p'$ then we have $\pi_1q''+ss'p'p=kq+sk'q'p+ss'p'p$ and hence $kq+sp=1_A$. Conversely, having $\pi_1q''+ss'p'p=1_A$ we get $kq+sk'q'p+ss'p'p=1_A$ and $kqs+sk'q'ps+ss'p'ps=s$ so $sk'q'+ss'p'=s$.
\end{proof}

Note that associativity is used to convert $(kq+sk'q'p)+ss'p'p$ into $kq+(sk'q'p+ss'p'p)$. Moreover, if the map $s$ is a homomorphism then condition $s=sk'q'+ss'p'$ is trivial.  A pointed semibiproduct $(X,A,B,p,k,q,s)$ in which the map $s$ is a homomorphism is called a pointed split semibiproduct.
This means that pointed split semibiproducts of monoids are stable under composition.

\section{The category of pointed semibiproducts of monoids}\label{Sec: Sbp}

The purpose of this section is to introduce the category of pointed semibiproducts of monoids, denoted $\Psb$.

\begin{definition}\label{def: pointed semibiproduct of monoids}
    A \emph{pointed semibiproduct of monoids} consists of
    a tuple $(X,A,B,p,k,q,s)$ that can also be represented as a diagram of the shape
    \begin{equation}
        \label{diag: biproduct}
        \xymatrix{X\ar@<-.5ex>[r]_{k} & A\ar@<-.5ex>@{..>}[l]_{q}\ar@<.5ex>[r]^{p} & B \ar@{..>}@<.5ex>[l]^{s}}
    \end{equation}
    in which $X$, $A$ and $B$ are monoids (not necessarily commutative), $p$, $k$, are monoid homomorphisms, while  $q$ and $s$ are zero-preserving maps. Moreover, the following conditions are satisfied:
    \begin{eqnarray}
        ps&=&1_B\\
        qk&=&1_X\\
        kq+sp&=&1_A\\
        pk&=&0_{X,B}\\
        qs&=&0_{B,X}.
    \end{eqnarray}
\end{definition}

A morphism in $\Psb$, from the object $(X,A,B,p,k,q,s)$ to the object $(X',A',B',p',k',q',s')$,  is a triple $(f_1,f_2,f_3)$, displayed as
\begin{equation}\label{diag:morphism of semi-biproduct}
    \vcenter{\xymatrix{X\ar@<-.5ex>[r]_{k}\ar@{->}[d]_{f_1} & A\ar@{->}@<0ex>[d]^{f_2}\ar@<-.5ex>@{..>}[l]_{q}\ar@<.5ex>[r]^{p} & B\ar@{->}[d]^{f_3} \ar@{->}@<.5ex>@{..>}[l]^{s}\\
    X'\ar@<-.5ex>[r]_{k'} & A' \ar@<-.5ex>@{..>}[l]_{q'}\ar@<.5ex>[r]^{p'} & B' \ar@{->}@<.5ex>@{..>}[l]^{s'}}}
\end{equation}
in which $f_1$, $f_2$ and $f_3$ are monoid homomorphisms and moreover the following conditions are satisfied: $f_2k=k'f_1$, $p'f_2=f_3p$, $f_2s=s'f_3$, $q'f_2=f_1q$.

\begin{theorem}\label{thm: a+a' and associativity}
    Let $(X,A,B,p,k,q,s)$ be a pointed semibiproduct of monoids. For every $a,a'\in A$ the element $a+a'\in A$ can be written in terms of $q(a)$, $q(a')$, $p(a)$ and $p(a')$ as
    \begin{equation}\label{eq: a+a'}
        k(q(a)+q(sp(a)+ kq(a'))+q(sp(a)+ sp(a')))+s(p(a)+p(a')).
    \end{equation}
\end{theorem}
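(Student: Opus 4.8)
The goal is to express $a+a'$ using the fundamental identity $kq+sp = 1_A$ together with the homomorphism properties of $p$ and $k$. The plan is to start by applying this decomposition to $a+a'$ itself, so that $a+a' = kq(a+a') + sp(a+a')$. Since $p$ is a homomorphism, the second summand is immediately $s(p(a)+p(a'))$, which matches the final term of the target expression. The real work is therefore to show that $kq(a+a')$ equals the first summand in $(\ref{eq: a+a'})$.

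\medskip

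First I would rewrite the argument $a+a'$ of the inner $q$ by substituting $a = kq(a)+sp(a)$ and $a' = kq(a')+sp(a')$, giving
\[
a+a' = kq(a) + sp(a) + kq(a') + sp(a').
\]
Then the strategy is to peel off terms one at a time using the defining identity and the fact that $q$ composed with $k$ is the identity while $q$ composed with $s$ is zero. The key computational tool is the formula, derivable from Proposition \ref{thm:kernel of p}, describing how $q$ interacts with a sum: for any $u,v \in A$ one has $q(u+v) = q(kq(u) + v)$ and, more usefully, a recursive law letting a leading factor of the form $kq(\cdot)$ be absorbed so that $q(kq(u)+v) = q(u)+q(sp(u)+v)$ whenever the homomorphism properties permit. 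I expect this absorption law to follow from writing $u = kq(u)+sp(u)$, applying $q$, and using $qk = 1_X$ together with the additivity of $q$ along kernel elements established in the exactness proof.

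\medskip

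With that absorption law in hand, the plan is to apply it twice. Applying it first to split off the $kq(a)$ at the front yields a $q(a)$ term plus $q$ of the remainder $sp(a) + kq(a') + sp(a')$; applying it a second time inside that remainder to isolate the middle block $sp(a)+kq(a')$ produces the term $q(sp(a)+kq(a'))$ together with $q(sp(a)+sp(a'))$. Reassembling these three pieces reproduces exactly
\[
q(a) + q(sp(a)+kq(a')) + q(sp(a)+sp(a')),
\]
and wrapping the whole thing in $k$ gives the first summand of $(\ref{eq: a+a'})$.

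\medskip

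The main obstacle I anticipate is justifying the absorption law rigorously, since $q$ is only a zero-preserving map and not a homomorphism; the additivity is conditional, valid only when enough terms lie in the image of $k$ or vanish under $p$. I would therefore be careful to verify at each application that the partial sums being manipulated satisfy the hypotheses under which $q$ behaves additively, relying on the computation in the proof of Proposition \ref{thm:kernel of p} that $q(k(x)+k(x')) = q(k(x)) + q(k(x'))$ and on $ps=1_B$, $pk=0$ to control the $p$-components. Associativity of addition in $A$ is used freely to regroup the four summands, exactly as in the preceding composition proposition.
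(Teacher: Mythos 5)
Your reduction fails at the very first step, before the ``absorption law'' even enters. You write $a+a'=kq(a+a')+sp(a+a')$ and declare that the task is to show $kq(a+a')$ equals the first summand of $(\ref{eq: a+a'})$. But the theorem only asserts that $a+a'$ \emph{equals} the expression $(\ref{eq: a+a'})$; it does not assert that $q(a+a')$ equals the inner sum $w=q(a)+q(sp(a)+kq(a'))+q(sp(a)+sp(a'))$. In a monoid without cancellation an element can have several distinct decompositions $k(x)+s(b)$ with the same $b$, so these are genuinely different claims, and the stronger one you aim at is false. Concretely, realize item (9) of Section \ref{sec: eg}, the action system $(M,G,\rho_1,\varphi_1,\gamma_1)$, as a semibiproduct via Theorem \ref{thm: functor R syntehic construction}, so that $A=R=\{(1,1),(1,2),(2,1)\}$, $k(x)=(x,1)$, $s(b)=(1,b)$, $q=\pi_X$, $p=\pi_B$. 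Take $a=k(2)=(2,1)$ and $a'=s(2)=(1,2)$. Then $a+a'=(\rho_1(2\cdot\varphi_1(1,1)\cdot\gamma_1(1,2),2),2)=(\rho_1(2,2),2)=(1,2)$, so $q(a+a')=1$; on the other hand $w=2\cdot q(sp(a)+kq(a'))\cdot q(sp(a)+sp(a'))=2\cdot 1\cdot 1=2$. Both $k(2)+s(2)$ and $k(1)+s(2)$ equal $(1,2)$, so the theorem holds, but $kq(a+a')=k(1)\neq k(2)=k(w)$. The same pair refutes your absorption law $q(kq(u)+v)=q(u)+q(sp(u)+v)$: with $u=a$, $v=a'$ the left side is $1$ and the right side is $2$. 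The outer $\rho$ in the synthetic operation $(\ref{eq: semibiproduct sunthetic operation})$ exists precisely because $w$ need not already satisfy $\rho(w,b+b')=w$.

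The paper's proof never applies $q$ to a sum and never needs any conditional additivity of $q$. It decomposes $a$ and $a'$ \emph{separately}, writing $a+a'=kq(a)+(sp(a)+kq(a'))+sp(a')$, then applies $kq+sp=1_A$ to the middle block $sp(a)+kq(a')$ and simplifies $sp(sp(a)+kq(a'))=s(psp(a)+pkq(a'))=sp(a)$ using only that $p$ is a homomorphism with $ps=1_B$ and $pk=0$; a second application of $kq+sp=1_A$ to $sp(a)+sp(a')$ and the same simplification yield $kq(a)+kq(sp(a)+kq(a'))+kq(sp(a)+sp(a'))+s(p(a)+p(a'))$, after which the three $kq(\cdot)$ terms are collected under a single $k$ because $k$ is a homomorphism. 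To repair your argument you must abandon the reduction to $kq(a+a')$ and instead build the right-hand side of $(\ref{eq: a+a'})$ directly by these repeated substitutions into sub-sums.
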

\begin{proof}
    We observe:
    \begin{eqnarray*}
        a+a'&=& kqa+(spa+kqa')+spa'  \qquad(kq+sp=1)\\
        &=& kqa+kq(spa+kqa')+sp(spa+kqa')+spa'  \\
        &=& kqa+kq(spa+kqa')+spa+spa'  \qquad( ps=1,pk=0)\\
        &=&  kqa+kq(spa+kqa')+kq(spa+spa')+sp(spa+spa')\\
        &=&  kqa+kq(spa+kqa')+kq(spa+spa')+s(pa+pa')\\
        &=& k(qa+q(spa+kqa')+q(spa+spa'))+sp(a+a').
    \end{eqnarray*}
\end{proof}

The previous result suggests a transport of structure from the monoid $A$ into the set  $X\times B$ as motivated with formula $(\ref{eq: operation})$ in Section \ref{sec: motivation}. However, as we will see, in order to keep an isomorphism with $A$ we need to restrict the set $X\times B$ to those pairs $(x,b)$ for which there exists $a\in A$ such that $x=q(a)$ and $b=p(a)$.

\section{The category of pointed monoid action systems}\label{Sec: Act}

The purpose of this section is to introduce the category of pointed monoid action systems, which will be denoted as $\Act$. This category is obtained by requiring the existence of a categorical equivalence between $\Act$ and $\Psb$ (see Theorem \ref{thm: equivalence}).

\begin{definition}\label{def: pseudo-action}
    A \emph{pointed monoid action system} is a five-tuple $$(X,B,\rho,\varphi,\gamma)$$ in which $X$ and $B$ are monoids, $\rho\colon{X\times B\to X}$, $\varphi\colon{B\times X\to X}$, $\gamma\colon{B\times B\to X}$ are maps such that the following conditions are satisfied for every $x\in X$ and $b,b'\in B$:
    \begin{eqnarray}
        \rho(x,0)=x,\quad \rho(0,b)=0\label{eq:act01}\\
        \varphi(0,x)=x,\quad \varphi(b,0)=0\label{eq:act02}\\
        \gamma(b,0)=0=\gamma(0,b)\label{eq:act03}\\
        \rho(x,b)=\rho(\rho(x,b),b)\label{eq:act04}\\
        \varphi(b,x)=\rho(\varphi(b,x),b)\label{eq:act05}\\
        \gamma(b,b')=\rho(\gamma(b,b'),b+b')\label{eq:act06}
    \end{eqnarray}
    and moreover the following condition holds for every $x,x',x''\in X$ and $b,b',b''\in B$,
    \begin{eqnarray}\label{eq:act07}
        \rho(\rho(x+\varphi(b,x')+\gamma(b,b'),b+b')+\varphi(b+b',x'')+\gamma(b+b',b''),b''')=\nonumber\\
        =\rho(x+\varphi(b,\rho(x'+\varphi(b', x'') + \gamma(b', b''),{b'+b''}))+\gamma(b, b'+b''),{b'''})\quad
    \end{eqnarray}
    where $b'''=b+b'+b''$.
\end{definition}

A morphism of pointed monoid action systems, say from $(X,B,\rho,\varphi,\gamma)$ to $(X',B',\rho',\varphi',\gamma')$ is a pair $(f,g)$ of monoid homomorphisms, with $f\colon{X\to X'}$ and $g\colon{B\to B'}$ such that for every $x\in X$ and $b,b'\in B$
\begin{eqnarray}
    f(\rho(x,b))&=&\rho'(f(x),{g(b)}),\label{eq:act08}\\
    f(\varphi(b, x))&=&\varphi'(g(b), {f(x)}),\label{eq:act09}\\
    f(\gamma(b, b'))&=&\gamma'(g(b), g(b')).\label{eq:act10}
\end{eqnarray}

\begin{theorem}\label{thm: functor R syntehic construction}
    There exists a functor $R\colon{\Act\to \Mon}$ such that for every morphism in $\Act$, say $(f,g)\colon{(X,B,\rho,\varphi,\gamma)\to (X',B',\rho',\varphi',\gamma')}$, the diagram
    \begin{equation}\label{diag:semi-biproduct with R}
        \xymatrix{X\ar@<-.5ex>[r]_(.3){\langle 1,0\rangle}\ar@{->}[d]_{f} & R(X,B,\rho,\varphi,\gamma)\ar@{->}@<.0ex>[d]^{R(f,g)}\ar@<-.5ex>@{..>}[l]_(.7){\pi_X}\ar@<.5ex>[r]^(.7){\pi_B} & B\ar@{->}[d]^{g} \ar@{->}@<.5ex>@{..>}[l]^(.3){\langle 0,1\rangle}\\
        X'\ar@<-.5ex>[r]_(.3){\langle 1,0 \rangle} & R{(X',B',{\rho',\varphi',\gamma'})} \ar@<-.5ex>@{..>}[l]_(.7){\pi_{X}}\ar@<.5ex>[r]^(.7){\pi_B} & B \ar@{->}@<.5ex>@{..>}[l]^(.3){{\langle 0 ,1 \rangle}}}
    \end{equation}
    is a morphism in $\Psb$.
\end{theorem}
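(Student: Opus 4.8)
The plan is to define $R$ on objects by transporting the structure suggested by formula (\ref{eq: operation}): for an object $(X,B,\rho,\varphi,\gamma)$ of $\Act$, let $R(X,B,\rho,\varphi,\gamma)$ be the subset $\{(x,b)\in X\times B\mid \rho(x,b)=x\}$ equipped with neutral element $(0,0)$ and operation
\[(x,b)+(x',b')=(\rho(x+\varphi(b,x')+\gamma(b,b'),b+b'),b+b').\]
First I would check that this is a monoid. Well-definedness of the operation (that the right-hand side again lies in the subset) is exactly condition (\ref{eq:act04}) applied to the first coordinate, since the second coordinate is $b+b'$; membership of $(0,0)$ follows from $\rho(0,0)=0$, a special case of (\ref{eq:act01}). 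The two unit laws unpack, using the degeneracy axioms (\ref{eq:act01})--(\ref{eq:act03}), to $\rho(x',b')=x'$ and $\rho(x,b)=x$ respectively, hence hold for pairs in the subset by the defining relation.

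The hard part -- and the only computation requiring more than the degeneracy axioms -- is associativity. Expanding both $((x,b)+(x',b'))+(x'',b'')$ and $(x,b)+((x',b')+(x'',b''))$ according to the operation, the two second coordinates are both $b+b'+b''$, while the two first coordinates are precisely the left- and right-hand sides of (\ref{eq:act07}) with $b'''=b+b'+b''$. Thus associativity is equivalent to (\ref{eq:act07}); this is exactly why that axiom was imposed, and so no genuine obstacle remains.

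Next I would introduce the four structural maps of diagram (\ref{diag:semi-biproduct with R}), namely $k=\langle 1,0\rangle\colon x\mapsto(x,0)$, $s=\langle 0,1\rangle\colon b\mapsto(0,b)$, and the projections $q=\pi_X$ and $p=\pi_B$; condition (\ref{eq:act01}) guarantees that $(x,0)$ and $(0,b)$ lie in the subset. A short calculation with (\ref{eq:act01})--(\ref{eq:act03}) shows $k$ and $p$ are monoid homomorphisms while $q,s$ are zero-preserving, so each row of the diagram is a candidate object of $\Psb$. The five semibiproduct identities then follow at once: $ps=1_B$, $qk=1_X$, $pk=0$ and $qs=0$ are immediate from the formulas for the injections and projections, while $kq+sp=1$ reduces to $k(x)+s(b)=(x,0)+(0,b)=(\rho(x,b),b)=(x,b)$, using $\varphi(0,0)=0$, $\gamma(0,b)=0$ from (\ref{eq:act02})--(\ref{eq:act03}) together with the relation $\rho(x,b)=x$.

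Finally I would define $R$ on a morphism $(f,g)$ by $R(f,g)(x,b)=(f(x),g(b))$. This is well-defined by (\ref{eq:act08}), which gives $\rho'(f(x),g(b))=f(\rho(x,b))=f(x)$, and functoriality is immediate since $R(f,g)$ acts coordinatewise. That $R(f,g)$ is a monoid homomorphism follows by expanding both sides of $R(f,g)((x,b)+(x',b'))=R(f,g)(x,b)+R(f,g)(x',b')$ and invoking the compatibility axioms (\ref{eq:act08})--(\ref{eq:act10}) to transport $\rho,\varphi,\gamma$ through $f$ and $g$. The four conditions defining a morphism in $\Psb$ (those displayed after (\ref{diag:morphism of semi-biproduct})) then reduce to the identities $R(f,g)\langle 1,0\rangle=\langle 1,0\rangle f$, $\pi_B R(f,g)=g\pi_B$, $R(f,g)\langle 0,1\rangle=\langle 0,1\rangle g$ and $\pi_X R(f,g)=f\pi_X$, each verified by evaluating on an arbitrary pair $(x,b)$ and using $g(0)=0$, $f(0)=0$.
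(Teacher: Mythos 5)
Your proposal is correct and follows essentially the same route as the paper: the paper defines $R(X,B,\rho,\varphi,\gamma)$ as the same subset of $X\times B$ with the same synthetic operation, invokes (\ref{eq:act04}) for well-definedness, (\ref{eq:act07}) for associativity, and (\ref{eq:act01})--(\ref{eq:act03}) for the structural maps, deferring the remaining verifications to the preprint. Your write-up simply carries out those deferred checks explicitly (unit laws, the five semibiproduct identities, and the morphism conditions for $R(f,g)$), all correctly.
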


The functor $R$ realizes a pointed monoid action system  $(X,B,\rho,\varphi,\gamma)$ as a synthetic semibiproduct diagram
\begin{equation}\label{diag:synthetic semi-biproduct}
    \xymatrix{X\ar@<-.5ex>[r]_(.5){\langle 1,0\rangle} & R\ar@<-.5ex>@{..>}[l]_(.5){\pi_X}\ar@<.5ex>[r]^(.5){\pi_B} & B \ar@{..>}@<.5ex>[l]^(.5){\langle 0,1\rangle}}
\end{equation}
in which $R=R(X,B,{\rho,\varphi,\gamma})=\{(x,b)\in X\times B\mid \rho(x,b)=x\}$ is equipped with the binary synthetic operation
\begin{equation}\label{eq: semibiproduct sunthetic operation}
    (x,b)+(x',b')=(\rho(x+\varphi(b,x')+\gamma(b,b'),b+b'),b+b')
\end{equation}
which is well defined for every $x,x'\in X$ and $b,b'\in B$ due to condition $(\ref{eq:act04})$ and is associative due to condition $(\ref{eq:act07})$. It is clear that $\pi_B$ is a monoid homomorphism and due to conditions $(\ref{eq:act01})$--$(\ref{eq:act03})$ we see that the maps $\langle 1,0 \rangle $ and $\langle 0 ,1\rangle $ are well defined and moreover $\langle 1,0\rangle $ is a monoid homomorphism. Finally, we observe that a pair $(x,b)\in X\times B$ is in $R$ if and only if $(x,b)=(x,0)+(0,b)$. Further details  can be found in the preprint \cite{NMF.20a-of}.

\section{The equivalence}

In order to establish a categorical equivalence between $\Act$ and $\Psb$ we need a procedure to associate a pointed monoid action system to every pointed semibiproduct of monoids in a functorial manner.

\begin{theorem}\label{thm: pseudo-actions}
    Let $(X,A,B,p,k,q,s)$ be an object in $\Psb$. The system $(X,B,\rho,\varphi,\gamma)$ with
    \begin{eqnarray}
        \rho(x,b)=q(k(x)+s(b))\\
        \varphi(b,x)=q(s(b)+k(x))\\
        \gamma(b,b')=q(s(b)+s(b'))
    \end{eqnarray}
    is an object in $\Act$.
    Moreover, if $(f_1,f_2,f_3)$ is a morphism in $\Psb$ then $(f_1,f_3)$ is a morphism in $\Act$.
\end{theorem}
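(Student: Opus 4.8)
The plan is to verify, one by one, that the proposed maps $\rho,\varphi,\gamma$ satisfy the defining equations (\ref{eq:act01})--(\ref{eq:act07}) of a pointed monoid action system, and then to check that the naturality conditions (\ref{eq:act08})--(\ref{eq:act10}) hold for the pair $(f_1,f_3)$ arising from a morphism $(f_1,f_2,f_3)$ in $\Psb$. Throughout I would lean heavily on the seven defining identities of a pointed semibiproduct of monoids, chiefly $kq+sp=1_A$, $qk=1_X$, $ps=1_B$, $pk=0$, and $qs=0$, together with the fact that $p$ and $k$ are homomorphisms while $q$ and $s$ are merely zero-preserving.

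First I would dispatch the pointedness and the normalization axioms (\ref{eq:act01})--(\ref{eq:act03}). For example $\rho(x,0)=q(k(x)+s(0))=q(k(x))=qk(x)=x$, using that $s$ is zero-preserving and $qk=1_X$; and $\rho(0,b)=q(k(0)+s(b))=q(s(b))=qs(b)=0$, using $qs=0$. The identities for $\varphi$ and $\gamma$ follow by the same pattern, alternately invoking $qk=1_X$, $qs=0$, and the zero-preservation of $k,s$. Next come the ``idempotency under $\rho$'' axioms (\ref{eq:act04})--(\ref{eq:act06}). These amount to the single observation that for any $a\in A$ one has $\rho(q(a),p(a))=q(a)$: indeed $\rho(q(a),p(a))=q(kq(a)+sp(a))=q(1_A(a))=q(a)$ by (\ref{eq: biproduct3}). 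Then (\ref{eq:act04}) is the instance $a=k(x)+s(b)$ (so that $p(a)=b$ because $pk=0$ and $ps=1_B$, while $q(a)=\rho(x,b)$), (\ref{eq:act05}) is the instance $a=s(b)+k(x)$, and (\ref{eq:act06}) is the instance $a=s(b)+s(b')$; in each case one checks $p(a)$ equals the claimed second argument using $pk=0$, $ps=1_B$, and that $p$ is a homomorphism.

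The main obstacle is the associativity-type axiom (\ref{eq:act07}), which is the one genuinely heavy computation. My approach here would be to avoid brute-forcing the six-variable identity directly and instead reduce it to the associativity of $A$, exactly as Theorem \ref{thm: a+a' and associativity} does for $A$ itself. Concretely, writing $a=k(x)+s(b)$, $a'=k(x')+s(b')$, $a''=k(x'')+s(b'')$, I would observe that the synthetic operation on $R$ transports the addition of $A$: the formula (\ref{eq: semibiproduct sunthetic operation}) is precisely the pair $(q(a+a'),p(a+a'))$ expressed through $\rho,\varphi,\gamma$, since $q(a+a')=q(k(x)+s(b)+k(x')+s(b'))$ unfolds into $\rho(x+\varphi(b,x')+\gamma(b,b'),b+b')$ after inserting $k(x)+s(b)+k(x')+s(b')=k(x)+(s(b)+k(x'))+s(b')$ and applying the identity from Theorem \ref{thm: a+a' and associativity}. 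Both sides of (\ref{eq:act07}) then represent $q((a+a')+a'')$ and $q(a+(a'+a''))$ respectively, so the identity follows from associativity in $A$. The delicate point to get right is the bookkeeping of which $\rho$'s appear and the fact that the outermost argument $b'''=b+b'+b''$ matches on both sides; I would verify this by tracking $p$ of the relevant elements, again using $pk=0$ and $ps=1_B$.

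Finally, for the morphism part, I would use the commutativity relations defining a $\Psb$-morphism, namely $f_2k=k'f_1$, $p'f_2=f_3p$, $f_2s=s'f_3$, and $q'f_2=f_1q$, together with the fact that $f_2$ is a homomorphism. For instance, $f_1(\rho(x,b))=f_1q(k(x)+s(b))=q'f_2(k(x)+s(b))=q'(f_2k(x)+f_2s(b))=q'(k'f_1(x)+s'f_3(b))=\rho'(f_1(x),f_3(b))$, which is exactly (\ref{eq:act08}); the verifications of (\ref{eq:act09}) and (\ref{eq:act10}) are identical in structure, differing only in the order of the $k$ and $s$ terms inside $q$. I expect this part to be routine once the object-level computation is in place.
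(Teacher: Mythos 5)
Your proposal is correct and follows essentially the same path as the paper: conditions $(\ref{eq:act01})$--$(\ref{eq:act03})$ from zero-preservation of $q$ and $s$, conditions $(\ref{eq:act04})$--$(\ref{eq:act06})$ by applying $q$ to the identity $a=kq(a)+sp(a)$ for the three relevant choices of $a$ (equivalently, your observation that $\rho(q(a),p(a))=q(a)$), condition $(\ref{eq:act07})$ by exhibiting both sides as $q$ applied to the two bracketings of $k(x)+s(b)+k(x')+s(b')+k(x'')+s(b'')$ via Theorem \ref{thm: a+a' and associativity}, and the morphism conditions from the commuting squares defining a morphism in $\Psb$. The one point to watch when writing out $(\ref{eq:act07})$ in full is that $q(k(x)+s(b))=\rho(x,b)$ need not equal $x$ for arbitrary $x\in X$, so the identification of the synthetic first component with $q(a+a')$ must go through the regrouping $k(x)+(s(b)+k(x'))+s(b')$ and the substitutions $s(b)+k(x')=k\varphi(b,x')+s(b)$, $s(b)+s(b')=k\gamma(b,b')+s(b+b')$, exactly as you indicate, rather than a literal substitution of $a=k(x)+s(b)$, $a'=k(x')+s(b')$ into the displayed formula of Theorem \ref{thm: a+a' and associativity}, which would produce $\rho(x,b)$ and $\rho(x',b')$ where $(\ref{eq:act07})$ has $x$ and $x'$.
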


\begin{proof}
    To see that the system $(X,B,\rho,\varphi,\gamma)$ is a well defined object in $\Act$ we recall that $q$ and $s$ are zero-preserving maps and hence conditions $(\ref{eq:act01})$--$(\ref{eq:act03})$ are satisfied. Conditions $(\ref{eq:act04})$--$(\ref{eq:act06})$ are obtained by applying the map $q$ to both sides of equations
    \begin{eqnarray*}
        k(x)+s(b)=kq(k(x)+s(b))+s(b)\\
        s(b)+k(x)=kq(s(b)+k(x))+s(b)\\
        s(b)+s(b')=kq(s(b)+s(b'))+s(b+b')
    \end{eqnarray*}
    which hold because $(X,A,B,p,k,q,s)$ is a pointed semibiproduct of monoids. Condition $(\ref{eq:act07})$ follows from Theorem \ref{thm: a+a' and associativity} with $a=k(x)+s(b)$, $a'=k(x')+s(b')+k(x'')+s(b'')$ on the one hand whereas on the other hand $a=k(x)+s(b)+k(x')+s(b')$, $a'=k(x'')+s(b'')$. Moreover, the pair $(f_1,f_3)$ is a morphism of actions as soon as the triple $(f_1,f_2,f_3)$ is a morphism of semibiproducts, indeed we have
    \begin{eqnarray*}
        f_1(\rho(x,b))&=&f_1q(k(x)+s(b))=q'f_2(k(x)+s(b))\\
        &=&q'(k'f_1(x)+s'f_3(b)))=\rho'(f_1(x),f_3(b))
    \end{eqnarray*}
    and similarly for $\varphi$ and $\gamma$ thus proving conditions $(\ref{eq:act08})$--$(\ref{eq:act10})$.
\end{proof}

The previous result describes a functor from the category of pointed semibipro\-ducts of monoids into the category of pointed monoid action systems, let us denote it by $P\colon{\Psb\to\Act}$. The synthetic construction of Theorem \ref{thm: functor R syntehic construction} produces a functor in the other direction, let us denote it $Q\colon{\Act\to\Psb}$. We will see that $PQ=1$ whereas  $QP\cong 1$.

\begin{theorem}\label{thm: equivalence}
    The categories $\Psb$ and $\Act$ are equivalent.
\end{theorem}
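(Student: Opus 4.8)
The plan is to assemble the two functors already in hand---$P\colon\Psb\to\Act$ from Theorem~\ref{thm: pseudo-actions} and $Q\colon\Act\to\Psb$ obtained from the synthetic construction of Theorem~\ref{thm: functor R syntehic construction}---into an equivalence by verifying the two relations announced just before the theorem: $PQ=1_{\Act}$ on the nose, and $QP\cong 1_{\Psb}$ through an explicit natural isomorphism.

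First I would compute $PQ$ on an object $(X,B,\rho,\varphi,\gamma)$. Applying $Q$ realizes it as the semibiproduct with $A=R$, $k=\langle 1,0\rangle$, $s=\langle 0,1\rangle$ and $q=\pi_X$; applying $P$ then returns $\rho'(x,b)=\pi_X(\langle 1,0\rangle(x)+\langle 0,1\rangle(b))$, and similarly for $\varphi'$ and $\gamma'$. Evaluating the synthetic operation~(\ref{eq: semibiproduct sunthetic operation}) on the relevant pairs $(x,0)+(0,b)$, $(0,b)+(x,0)$ and $(0,b)+(0,b')$, and using the unitality conditions~(\ref{eq:act02})--(\ref{eq:act03}) together with the compatibility conditions~(\ref{eq:act04})--(\ref{eq:act06}), I would obtain $\rho'=\rho$, $\varphi'=\varphi$ and $\gamma'=\gamma$. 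Since morphisms in both categories are carried by the same underlying pair, $(f_1,f_3)=(f,g)$, this already shows $PQ=1_{\Act}$ exactly.

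For the other composite, given $(X,A,B,p,k,q,s)$ with image $QP$ built on $R=\{(x,b)\mid\rho(x,b)=x\}$, I would define $\theta\colon A\to R$ by $\theta(a)=(q(a),p(a))$ and propose $(x,b)\mapsto k(x)+s(b)$ as its inverse. That $\theta(a)$ lands in $R$, and that the two maps are mutually inverse, follow at once from $kq+sp=1_A$, $pk=0$, $ps=1_B$ and $qk=1_X$. The crucial point is that $\theta$ is a monoid homomorphism; here I would invoke Theorem~\ref{thm: a+a' and associativity}, apply $q$ to the displayed expression for $a+a'$, and recognize the summands $q(sp(a)+kq(a'))$ and $q(sp(a)+sp(a'))$ as $\varphi(p(a),q(a'))$ and $\gamma(p(a),p(a'))$ respectively. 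What remains matches exactly the first coordinate of $\theta(a)+\theta(a')$ under~(\ref{eq: semibiproduct sunthetic operation}), while the second coordinate is $p(a+a')$ because $p$ is a homomorphism. This homomorphism check is the main obstacle, but it has been engineered in advance: Theorem~\ref{thm: a+a' and associativity} is precisely the lemma that recasts $a+a'$ into the shape demanded by the synthetic operation.

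Finally I would verify that $(1_X,\theta,1_B)$ is a morphism in $\Psb$, so that $\theta$ respects the full semibiproduct and not merely the monoid structure: the four compatibility identities $\theta k=\langle 1,0\rangle$, $\pi_B\theta=p$, $\theta s=\langle 0,1\rangle$ and $\pi_X\theta=q$ reduce directly to $qk=1_X$, $pk=0$, $ps=1_B$, $qs=0$ and the definition of $\theta$. Naturality of $\theta$ in the object $(X,A,B,p,k,q,s)$ is then a routine diagram chase using the morphism equations $f_2k=k'f_1$, $q'f_2=f_1q$, $p'f_2=f_3p$ and $f_2s=s'f_3$. Having produced $PQ=1_{\Act}$ together with a natural isomorphism $QP\cong 1_{\Psb}$, the equivalence of $\Psb$ and $\Act$ follows.
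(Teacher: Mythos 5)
Your proposal is correct and follows essentially the same route as the paper: the same functors $P$ and $Q$, the identity $PQ=1_{\Act}$ verified on the nose, and the natural isomorphism $QP\cong 1_{\Psb}$ given by $a\mapsto(q(a),p(a))$ with inverse $(x,b)\mapsto k(x)+s(b)$, with Theorem~\ref{thm: a+a' and associativity} supplying the homomorphism check. The only difference is that you spell out the ``routine calculation'' the paper defers to the preprint \cite{NMF.20a-of}.
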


\begin{proof}
    Theorem \ref{thm: pseudo-actions} tells us that $P(X,A,B,p,k,q,s)=(X,B,\rho,\varphi,\gamma)$ and $P(f_1,f_2,f_3)=(f_1,f_3)$ is a functor from $\Psb$ to $\Act$ whereas  Theorem \ref{thm: functor R syntehic construction} gives a functor $Q$ in the other direction. It is clear that $Q(X,B,\rho,\varphi,\gamma)=(X,R,B,\pi_B,\langle 1,0\rangle,\pi_X,\langle 0,1\rangle)$ is the synthetic realization displayed in $(\ref{diag:synthetic semi-biproduct})$ and hence it is a pointed semibiproduct. Moreover  $Q(f,g)=(f,R(f,g),g)$ with $R(f,g)$ illustrated as in  $(\ref{diag:semi-biproduct with R})$ and defined as $R(f,g)(x,b)=(f(x),g(b))$ is clearly a morphism of semibiproducts.

    We observe that $PQ(X,B,\rho,\varphi,\gamma)=(X,B,\rho,\varphi,\gamma)$ due to conditions $(\ref{eq:act05})$ and $(\ref{eq:act06})$. This proves $PQ=1$, in order to prove $QP\cong 1$ we need to specify natural isomorphisms $\alpha$ and $\beta$ as illustrated
    \begin{equation}
        \vcenter{\xymatrix{A\ar@<.5ex>[r]^(.3){\alpha_A}\ar[d]_{f_2} & RP(X,A,B,p,k,q,s)\ar@<.5ex>[l]^(.7){\beta_A}\ar[d]^{R(f_1,f_3)}\\
                A' \ar@<.5ex>[r]^(.3){\alpha_{A'}} & RP(X',A',B',p',k',q',s')\ar@<.5ex>[l]^(.7){\beta_{A'}} }}
    \end{equation}
    and show that they are compatible with diagrams $(\ref{diag:morphism of semi-biproduct})$ and $(\ref{diag:semi-biproduct with R})$. Indeed it is a routine calculation to check that $\alpha(a)=(q(a),p(a))$ and $\beta(x,b)=k(x)+s(b)$ are well defined natural isomorphisms  compatible with semibiproducts. Further details can be found in the preprint \cite{NMF.20a-of}.
\end{proof}

\section{Examples}\label{sec: eg}

Several examples can be found in the preprint \cite{NMF.20a-of}. Here we list all the possible pointed semibiproducts of monoids $(X,A,B,p,k,q,s)$ in which $X$ and $B$ are monoids with two elements. This particular case is interesting because it gives a simple list with all the possible components of an action system $(X,B,\rho,\varphi,\gamma)$. The equivalence of Theorem \ref{thm: equivalence} then gives us an easy way of checking all the possibilities. Let us denote by $M$ and $G$ the two monoids with two elements, $M$ being the idempotent monoid while $G$ being the group, both expressed in terms of multiplication tables as
\[M=\begin{pmatrix}
        1 & 2 \\
        2 & 2
    \end{pmatrix},
    \quad
    B=\begin{pmatrix}
        1 & 2 \\
        2 & 1
    \end{pmatrix}.\]
Note that we are using multiplicative notation so that $2\cdot 2=2$ in $M$, whereas in $G$ we have $2\cdot 2=1$. Due to restrictions $(\ref{eq:act01})$--$(\ref{eq:act03})$ we have the following two possibilities for each component $\rho$, $\varphi$ and $\gamma$:
\[\rho_0=\begin{pmatrix}
        1 & 1 \\
        2 & 2
    \end{pmatrix}
    ,
    \quad
    \rho_1=\begin{pmatrix}
        1 & 1 \\
        2 & 1
    \end{pmatrix},\]
\[\varphi_0=\begin{pmatrix}
        1 & 2 \\
        1 & 2
    \end{pmatrix},
    \quad
    \varphi_1=\begin{pmatrix}
        1 & 2 \\
        1 & 1
    \end{pmatrix},\]
\[\gamma_0=\begin{pmatrix}
        1 & 1 \\
        1 & 1
    \end{pmatrix},
    \quad
    \gamma_1=\begin{pmatrix}
        1 & 1 \\
        1 & 2
    \end{pmatrix}.\]
The following list shows all the possible 14 cases of pointed semibiproducts of monoids $(X,A,B,p,q,k,q,s)$ in which $X$ and $B$ are either $M$ or $G$ via the equivalence of Theorem \ref{thm: equivalence}.
\begin{multicols}{2}
    
\begin{enumerate}
    \item $(G,G,\rho_0,\varphi_0,\gamma_0)$
    \item $(G,G,\rho_0,\varphi_0,\gamma_1)$
    \item $(G,M,\rho_0,\varphi_0,\gamma_0)$
    \item $(G,M,\rho_0,\varphi_0,\gamma_1)$
    \item $(G,M,\rho_0,\varphi_1,\gamma_0)$
    \item $(G,M,\rho_1,\varphi_1,\gamma_0)$
    \item $(M,G,\rho_0,\varphi_0,\gamma_0)$
    \item $(M,G,\rho_0,\varphi_0,\gamma_1)$
    \item $(M,G,\rho_1,\varphi_1,\gamma_1)$
    \item $(M,M,\rho_0,\varphi_0,\gamma_0)$
    \item $(M,M,\rho_0,\varphi_0,\gamma_1)$
    \item $(M,M,\rho_0,\varphi_1,\gamma_0)$
    \item $(M,M,\rho_0,\varphi_1,\gamma_1)$
    \item $(M,M,\rho_1,\varphi_1,\gamma_0)$
\end{enumerate}

\end{multicols}
Note that the cases with $\gamma_0$ correspond to split extensions while the cases with $\rho_0$ correspond to Schreier extensions. The cases with $\rho_1$ correspond to $R=\{(1,1),(1,2),(2,1)\}$ since $(2,2)$ fails to be in $R$ because $\rho_1(2,2)=1\neq 2$. If interpreting $\varphi$ as an action then the map $\varphi_0$ is the trivial action whereas $\varphi_1$ is a non-trivial action.

\section{Conclusion}

A new tool has been introduced for the study of monoid extensions from which a new notion of action has emerged in order to establish the categorical equivalence of Theorem \ref{thm: equivalence}. A clear drawback to this approach is the necessity of handling morphisms and maps at the same level.
We have solved the problem by extending the hom-functor through an appropriate profunctor (Definition \ref{def: semibiproduct}). Other possible solutions would consider maps as an extra structure in higher dimensions \cite{Brown,NMF.15,NMF.20a-in} or as imaginary morphisms \cite{BZ,MontoliRodeloLinden}.
Developing further a categorical framework in which to study semibiproducts seems desirable due to several important cases occurring in different settings. For example, semibiproduct extensions can be studied in the context of preordered monoids \cite{NMF.20b} and preordered groups \cite{Preord}, where the maps $q$ and $s$ are required to be monotone maps rather than zero-preserving maps. The context of topological monoids \cite{Ganci} should also be worthwhile studying with $q$ and $s$ required to be continuous maps.

\section*{Acknowledgements}

This work was supported by Fundação para a Ciência e a Tecnologia (FCT UID-Multi-04044-2019), Centro2020 (PAMI -- ROTEIRO\-/0328\-/2013 -- 022158) and by the Polytechnic of Leiria through the projects CENTRO\--01\--0247: FEDER\--069665, FEDER\--069603, FEDER\--039958, FEDER\--039969, FEDER\--039863, FEDER\--024533.


\begin{thebibliography}{10}

    \bibitem{BB} F. Borceux, D. Bourn, \emph{Mal'cev, protomodular, homological and semi-abelian categories}, Mathematics and Its Applications 566, Kluwer 2004.

    \bibitem{Bourn} D. Bourn, \emph{From Groups to Categorial Algebra : Introduction to Protomodular and Mal'tsev Categories}, Compact Textbooks in Mathematics, Birkhäuser 2017.

    \bibitem{BournJanelidze} D. Bourn and G. Janelidze , \emph{Protomodularity, descent, and semidirect products}, Theory and Applications of Categories, \textbf{4}(2) (1998) 37--46.

    \bibitem{BZ} D. Bourn and Z. Janelidze, \emph{Subtractive Categories and Extended Subtractions}, Appl. Categor. Struct. 17, 317--343 (2009).

    \bibitem{DB.NMF.AM.MS.13} D. Bourn, N. Martins-Ferreira, A. Montoli and M. Sobral, \emph{Schreier split epimorphisms in monoids and in semirings}, Textos de Matemática Série B \textbf{45} (Departamento de Matemática, Universidade de Coimbra) 2013. 


    \bibitem{NMF14} D. Bourn, N. Martins-Ferreira, A. Montoli, M. Sobral, \emph{Schreier split epimorphisms between monoids}, Semigroup Forum \textbf{88} (2014) 739--752.

    \bibitem{DB.NMF.AM.MS.16} D. Bourn, N. Martins-Ferreira, A. Montoli and M. Sobral, \emph{Monoids and pointed S-protomodular categories}, Homology, Homotopy and Applications \textbf{48}(1) (2016) 151--172.


    \bibitem{Brown} R. Brown, \emph{Possible connections between wiskered categories and groupoids, Leibniz algebras, automorphism structures and local-to-global questions}, Journal of Homotopy and Related Structures \textbf{1}(1) (2010) 1--13.

    \bibitem{Preord} M. M. Clementino, N. Martins-ferreira and A. Montoli, \emph{On the categorical behaviour of preordered groups}, J. Pure Appl. Algebra \textbf{223} (2019) 4226--4245.

    \bibitem{Faul} P. Faul, \emph{A characterization of weakly Schreier extensions of monoids}, J. Pure Appl. Algebra \textbf{225} (2)
    (2021).

    \bibitem{Fleischer} I. Fleischer, \emph{Monoid extension theory},
    J. Pure Appl. Algebra \textbf{21} (1981) 15l--159.

    \bibitem{Ganci} J. J. Ganci, \emph{Schreier extensions of topological semigroups}, PhD, Louisiana State University (1975).

    \bibitem{GranJanelidzeSobral} M. Gran, G. Janelidze and M. Sobral, \emph{Split extensions and semidirect products of unitary magmas}, Comment. Math. Univ. Carolin. \textbf{60}(4) (2019) 509--527.

    \bibitem{semiabelian} G. Janelidze, L.  Márki, W. Tholen, Semi-abelian categories, J. Pure Appl. Algebra \textbf{168} (2002) 367--386.


    \bibitem{Leech} J. Leech, \emph{Extending groups by monoids}, Journal of Algebra \textbf{74} (1982) 1--19.

    \bibitem{Homology} S. Mac Lane, \emph{Homology}, Springer-Verlag, Berlin, 1963.



    \bibitem{MacLane2} S. Mac Lane, \emph{Cohomology Theory in Abstract Groups. III} Annals of Mathematics, Second Series, \textbf{50}(3) (1949) 736--761.

    \bibitem{NMF.15} N. Martins-Ferreira, \emph{On the notion of pseudocategory internal to a category with a 2-cell structure}, Tbil. Math. J. \textbf{8} (1) (2015)  107--141.

    \bibitem{NMF.20a-in} N. Martins-Ferreira, \emph{Semi-biproducts in monoids}, arXiv: 2002.05985v1 (math.RA).


    \bibitem{NMF.20a-of} N. Martins-Ferreira, \emph{Semi-biproducts of monoids}, arXiv: arXiv:2109.06278 (math.CT).



    \bibitem{NMF.20b} N. Martins-Ferreira and M. Sobral, \emph{Schreier split extensions of preordered monoids}, Journal of Logical and Algebraic Methods in Programming \textbf{120} (2021).

    \bibitem{NMF et all} N. Martins-Ferreira, A. Montoli, M. Sobral and A. Patchkoria, \emph{On the classification of Schreier extensions of monoids with non-abelian kernel}, Forum Math.  \textbf{32} (3) (2020) 607--623.

    \bibitem{MontoliRodeloLinden} A. Montoli, D. Rodelo and T. Van der Linden, \emph{Intrinsic Schreier split extensions}, Appl. Categ. Structures \textbf{28} (2020) 517–538.

    \bibitem{Northcott} D. G. Northcott, \emph{An introduction to Homological Algebra}, Cambridge University Press, 1960.


    \bibitem{Wells} C. Wells, \emph{Extension theories for monoids}, Semigroup Forum \textbf{16} (1978) 13--35.


\end{thebibliography}

\end{document}